\newtheorem{theorem}{Theorem}[section]
\newtheorem{lemma}[theorem]{{\bf Lemma}}
\newtheorem{proposition}[theorem]{{\bf Proposition}}
\newtheorem{definition}{Definition}[section]
\numberwithin{equation}{section}
\newenvironment{proof}{\indent{\em Proof:}}{\quad \hfill
$\Box$\vspace*{2ex}}
\begin{document}
\setcounter{page}{1}
\begin{center}
\vspace{0.3cm} {\large{\bf On the uniqueness of mild solutions to the time-fractional Navier-Stokes 
equations in $L^{N} \left( \mathbb{R} ^{N}\right) ^{N} $}} \\
\vspace{0.4cm}
 J. Vanterler da C. Sousa $^1$ \\
vanterler@ime.unicamp.br  \\

\vspace{0.30cm}
Sadek Gala $^2$\\
sgala793@gmail.com\\

\vspace{0.30cm}
E. Capelas de Oliveira $^3$\\
capelas@ime.unicamp.br\\
\vspace{0.35cm}

\vspace{0.30cm}
$^{1,3}$ Department of Applied Mathematics, Imecc-Unicamp,\\ 13083-859, Campinas, SP, Brazil.\\

\vspace{0.30cm}
{$^{2}$ Department of Sciences Exactes\\
Ecole Normale Superieure of Mostaganem - University of Mostaganem\\
Mostaganem 27000, Algeria}
\end{center}

\def\baselinestretch{1.0}\small\normalsize
\begin{abstract}
In this paper, we present the result of maximum regularity of the mild solution of the fractional Cauchy problem. As our main result, we investigate the uniqueness of mild solutions for time-fractional Navier-Stokes equations in class $C\left([0,\infty);L^{N}\left( \mathbb{R}^{N}\right)^{N}\right)$ 
by means of the estimates $L^{p}-L^{q}$ of Giga-Shor inequality and the Gronwall inequality.
\end{abstract}
\noindent\textbf{Key words:} Uniqueness, time-fractional Navier-Stokes equations, maximum regularity, Gronwall inequality. \\
\noindent
\textbf{2010 Mathematics Subject Classification:}   26A33, 34G25, 34A12.
\allowdisplaybreaks
\section{Introduction}

Investigating Navier-Stokes equations has always been a
challenge for many researchers in the field of partial
differential equations, due to their importance and great relevance
\cite{inteiro1,inteiro2,inteiro3,navier,principal}. For instance, they 
are fundamental in modeling fluid behavior in
physical systems such as sea currents, blood flow and air masses, among
others.  Navier-Stokes equations form a system of non-linear differential
equations which still presents some open problems \cite{navier}. 
In order to the existence, uniqueness,
and regularity of solutions of Navier-Stokes equations,
we need some specific mathematical tools, which in turn
require great effort and dedication \cite{navier,mild2,mild3}.  A classic
example of this fact is that the existence of a mild global solution to the
three-dimensional equations for incompressible fluids remains
an open problem. 

Fractional calculus is also an important area of mathematics 
due to its 
well-founded theoretical basis, as well as its many applications
\cite{Jo2,Jo1,van,samko}. In recent times, 
researchers began to investigate the existence, uniqueness and regularity of mild solutions of time-fractional 
Navier-Stokes equations 
\cite{mild,mild1}. The project of unifying fractional calculus and Navier-Stokes equations is in fact 
something that is growing, and new 
works with interesting results are certainly to be expected. 

In 2015, Neto and Planas \cite{mild} wrote 
a work on mild solutions of time-fractional Navier-Stokes equations, in which they 
investigated the existence and uniqueness of mild solutions in $\mathbb{R}^{N}$. 
Peng et al. \cite{mild1}, in 2017, presented  
an excellent work on the properties of mild solutions of the time-fractional Navier-Stokes 
equations in Sobolev space via harmonic analysis. In the same year, Zhou and Peng \cite{weak}, established 
the existence and uniqueness of mild solutions (local and global) in $H^{\beta,q}$, for the Navier-Stokes 
equations with the Caputo fractional derivative of order $\alpha\in(0,1)$. In the same work, the authors 
investigated the existence and regularity of classical solutions. For a discussion of the results on 
solutions of Navier-Stokes equations using fractional derivatives, we suggest \cite{momani,wang12,Yukang,cholewa,ferreira,tang}.

In this paper, we consider the 
$N$-dimensional time-fractional Navier-Stokes equations in $\mathbb{R}^{N}$ $\left( N\geq 3\right) $, 
given by
\begin{equation}\label{eq1}
\left\{ 
\begin{array}{c}
\begin{array}{cll}
^{C}\mathbb{D}_{t}^{\alpha }\mathbf{u} & = & \Delta \mathbf{u}-\left( \mathbf{u}\cdot\nabla \right) 
\mathbf{u}-\nabla \mathbf{p} \\ 
\nabla \cdot \mathbf{u} & = & 0
\end{array}
\\ 
\left( x,t\right) \in \mathbb{R}^{N}\times \left( 0,T\right)\end{array}
\right.
\end{equation}
where $^{C}\mathbb{D}_{t}^{\alpha }\mathbf{u}\left( \cdot \right) $ is a Caputo fractional derivative 
of order $\alpha \in \left( 0,1\right)$, $\mathbf{u}=\mathbf{u}\left(x,t\right) :\mathbb{R}^{N}\times 
\mathbb{R} ^{+}\rightarrow \mathbb{R}^{N}$, $p\left( x,t\right) :\mathbb{R}^{N}\times \mathbb{R}^{+}
\rightarrow \mathbb{R}$ is the pressure (unknown), whose role is to maintain the divergence equal to 0, 
$ \nabla $ is the differential operator $\left( \partial _{x_{1}},...,\partial _{x_{N}}\right) $, 
$\nabla \cdot \mathbf{u}$ is the divergence of $\mathbf{u}$, $\Delta $ is Laplace operator, while 
$\left( \mathbf{u}\cdot\nabla \right) $ is the derivation operator 
$\mathbf{u}_{1}\partial_{x_{1}}+\mathbf{u}_{2}\partial _{x_{2}}+\cdot \cdot \cdot +
\mathbf{u}_{N}\partial_{x_{N}}$. We also have: $\left( \mathbf{u}\cdot\nabla \right) 
\mathbf{u}=\underset{j}{\sum }\partial _{j}\left( \mathbf{u}^{j}\mathbf{u}\right) $; 
$p=\left( -\Delta \right) ^{-1}\underset{j,k}{\sum }\partial _{j}\partial _{k}\left( 
\mathbf{u}^{j}\mathbf{u}^{k}\right)$; $\mathbb{P}=I_{d}-\nabla \Delta ^{-1}\nabla =I_{d}+R\otimes R$ 
where $R=\dfrac{1}{\sqrt{-\Delta }}\nabla $ is the Riesz transform and $R=\left( R_{1},...,R_{N}\right) ,$ 
$\widehat{R_{j}}f=i\dfrac{\xi _{j}}{\left\vert \xi \right\vert }\widehat{f}$ and $\mathbb{P}:L^{r}
\rightarrow L^{r}$ is the projector of Helmholtz-Leray.

Applying the projector $\mathbb{P}$ on both sides of the Eq.(\ref{eq1}) and
using the condition of divergence, we have $\mathbb{P}\mathbf{u}=\mathbf{u}$,
$\mathbb{P}$ $^{C}\mathbb{D}_{t}^{\alpha } \mathbf{u}=$
$^{C}\mathbb{D}_{t}^{\alpha }\mathbf{u}$, $\mathbb{P}\nabla \mathbf{p}=0$.
Substituting the term $\left( \mathbf{u} \cdot\nabla \right)
\mathbf{u}$ by $\nabla \cdot\left(\mathbf{u}\otimes \mathbf{u}\right) =\left(
\nabla  \cdot \mathbf{u}\right) \mathbf{u}+\left( \mathbf{u} \cdot\nabla
\right) \mathbf{u}$ (considered as a distribution), we then
have that the Cauchy problem for the incompressible time-fractional
Navier-Stokes equations in $\mathbb{R}^{N}$, can be rewritten as 
\begin{equation}\label{eq2}
\left\{ 
\begin{array}{rll}
^{C}\mathbb{D}_{t}^{\alpha }\mathbf{u}-\Delta \mathbf{u}+\mathbb{P}\nabla \cdot\left( \mathbf{u}\otimes \mathbf{u}\right) \mathbf{u} & = & 0,\,\,
\text{for }t\in \lbrack 0,T),\text{ }x\in \mathbb{R}^{N} \\ 
\nabla \cdot \mathbf{u} & = & 0,\text{ for }t>0,\text{ }x\in \mathbb{R}^{N} \\ 
\mathbf{u}\left( 0\right)  & = & \mathbf{u}_{0}.
\end{array}
\right. 
\end{equation}
Throughout the paper, we assume that the speed $\mathbf{u}_{0}$ satisfies $\nabla \cdot \mathbf{u_{0}} = 0\cdot T$
with $0< T \leq \infty$.

The Eq.(\ref{eq2}), in abstract form, is given by
\begin{equation}
\left\{ 
\begin{array}{cll}
^{C}\mathbb{D}_{t}^{\alpha }\mathbf{u} & = & A_{r}\mathbf{u}+F\left( \mathbf{u}\right) \\ 
\mathbf{u}\left( 0\right) & = & \mathbf{u}_{0}
\end{array}
\right.
\end{equation}
where $A_{r}\mathbf{u}=\Delta \mathbf{u}$ with $A_{r}:D\left( A_{r}\right) \subset 
L_{\sigma }^{r}\rightarrow L_{\sigma
}^{r}$ is the Stokes operator and $F\left( \mathbf{u}\right) =-\mathbb{P}\nabla \cdot\left( \mathbf{u}\otimes 
\mathbf{u}\right)$.

In what follows we investigate the
uniqueness of mild solutions for $N$-dimensional time-fractional Navier-Stokes
equations given by Eq.(\ref{eq2}) in order to provide new results for
this area and strengthen the link between fractional calculus
and partial differential equations, especially Navier-Stokes equations. In
addition, we demonstrate a result on maximum regularity for
the mild solution of the fractional Cauchy problem according to the Lemma
\ref{lema1}.

The paper is organized as follows. In section 2, we present the definition of
fractional Laplacian and the Gagliadro-Niremberg-Sobolev and Gronwall
inequalities; in addition, we present the definitions of Riemann-Liouville
fractional integral and Caputo fractional derivative.  We then show the mild
solution for the time-fractional Navier-Stokes equations given by the integral
equation; the solution is written in terms of the Mittag-Leffler functions of
one and two parameters. We investigate the maximum regularity of the mild
solution of the fractional Cauchy problem, that is, Lemma \ref{lema1}.  To
conclude the section, we present the proof of the Lemma \ref{lema2}, which is
fundamental to the proof of the main result of this paper. In section 3, we
investigate the uniqueness of the mild solutions of the time-fractional
Navier-Stokes equations written with the Caputo fractional derivative, using
the techniques presented in section 2.  Concluding remarks close the paper.

\section{Preliminary results}

Consider the Schwartz class, the class of $C^{\infty}$ functions on 
$\mathbb{R}^{N}$ whose derivatives decay faster than any polynomial. 


\begin{equation*} S:=\left\{ \mathbf{u}\in
	C^{\infty }\left( \mathbb{R}^{N}\right) :\underset{x\in
	\mathbb{R}^{N}}{\sup }\left\vert x^{\xi }\partial ^{\delta
	}\mathbf{u}\left( x\right) \right\vert <\infty ,\forall \xi ,\delta \in
\mathbb{N}^{N}\right\} . 
\end{equation*} 

\begin{definition}

Let $s \in \left( 0,1\right)$. The fractional Laplacian of order $s$ of the function $\mathbf{u}\in S,$ in which we denote by $\left( -\Delta \right) ^{s}\mathbf{u},$ is defined by {\rm\cite{mild}}
\begin{equation}\label{eq3}
\left( -\Delta \right) ^{s}\mathbf{u}\left( x\right) :=C\left( N,s\right) \text{ {\normalfont P.V}}\int_{\mathbb{R}^{N}}\frac{\mathbf{u}\left( x\right) -\mathbf{u}\left( y\right) }{\left\vert x-y\right\vert ^{N+2s}}dy
\end{equation}
where $C\left( N,s\right) :=\dfrac{2^{2s}s\Gamma \left( s+\frac{N}{2}\right) }{\pi ^{N/2}\Gamma \left( 1-s\right) }$ is a normalization constant.
\end{definition}

For a fixed $T>0$, we use the notation \cite{navier}
\begin{equation}\label{eq4}
\left\Vert h\right\Vert _{p,q,T}=\left( \int_{0}^{T}\left\Vert h\right\Vert_{L^{p}\left( \mathbb{R}^{N}\right) ^{N}}^{q}dt\right) ^{1/q},1\leq p\leq \infty ,\text{ }1\leq q\leq \infty 
\end{equation}
which denotes the standard space $L^{q}\left( \left( 0,T\right) ;L^{p}\left( \mathbb{R}^{N}\right) ^{N}\right)$ with the obvious modification if $q=\infty$.

We shall use the following inequality \cite{principal}:
\begin{equation}\label{va}
\left( a+b\right) ^{\beta }\leq 2^{\beta -1}\left( a^{\beta }+b^{\beta}\right) 
\end{equation}
for $a,b\geq 0$ and $\beta \geq 1$.

\begin{theorem}\label{gaglia}{\rm\cite{gagli}}{\rm (Gagliardo-Nirenberg-Sobolev
	inequality)} Assume that $1\leq p\leq N$. Then there exists a constant
	$C$ dependending only on $p$ and $N$ such that
\begin{equation*}
\left\Vert \mathbf{u}\right\Vert _{L^{\frac{pN}{N-p}\left( \mathbb{R}^{N}\right) }}\leq C\left\Vert \nabla \mathbf{u}\right\Vert _{L^{p}\left( \mathbb{R}^{N}\right) }
\end{equation*}
for all $\mathbf{u}\in C_{0}^{1}\left( \mathbb{R}^{N}\right)$.
\end{theorem}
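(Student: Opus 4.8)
The plan is to prove the inequality first in the borderline case $p=1$ and then bootstrap to $1\leq p<N$ by applying that case to a suitable power of $\mathbf{u}$. (Note that the exponent $\frac{pN}{N-p}$ forces $p<N$ for the statement to be meaningful, so I treat $1\leq p<N$.) For the case $p=1$, the starting point is that for $\mathbf{u}\in C_0^1(\mathbb{R}^N)$ and each index $i$, integration in the $i$-th variable gives
\begin{equation*}
\left\vert \mathbf{u}(x)\right\vert \leq \int_{-\infty}^{\infty}\left\vert \partial_{x_i}\mathbf{u}(x_1,\dots,y_i,\dots,x_N)\right\vert \, dy_i \leq \int_{-\infty}^{\infty}\left\vert \nabla\mathbf{u}\right\vert \, dy_i .
\end{equation*}
Multiplying these $N$ inequalities and raising to the power $\frac{1}{N-1}$ yields a bound on $\left\vert \mathbf{u}(x)\right\vert^{N/(N-1)}$ by a product of $N$ one-dimensional integrals. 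I would then integrate successively in $x_1,x_2,\dots,x_N$; at each stage one factor does not depend on the current integration variable and can be pulled out, while the remaining $N-1$ factors (each a $\frac{1}{N-1}$-th power) are handled by the generalized H\"older inequality with $N-1$ exponents all equal to $N-1$. Iterating through all $N$ variables produces
\begin{equation*}
\left\Vert \mathbf{u}\right\Vert_{L^{N/(N-1)}(\mathbb{R}^N)} \leq \prod_{i=1}^{N}\left\Vert \partial_{x_i}\mathbf{u}\right\Vert_{L^1(\mathbb{R}^N)}^{1/N} \leq \left\Vert \nabla\mathbf{u}\right\Vert_{L^1(\mathbb{R}^N)},
\end{equation*}
the last step by the arithmetic--geometric mean inequality.

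For $1<p<N$, I would apply the case just established to $\mathbf{v}:=\left\vert\mathbf{u}\right\vert^{\gamma}$ with the choice $\gamma:=\dfrac{p(N-1)}{N-p}>1$, which guarantees $\mathbf{v}\in C_0^1(\mathbb{R}^N)$ with $\nabla\mathbf{v}=\gamma\left\vert\mathbf{u}\right\vert^{\gamma-1}\nabla\mathbf{u}$ up to sign. Since $\gamma\cdot\frac{N}{N-1}=\frac{pN}{N-p}$, the left-hand side becomes $\left\Vert\mathbf{u}\right\Vert_{L^{pN/(N-p)}}^{\gamma}$. On the right-hand side, H\"older's inequality with conjugate exponents $p$ and $p'=\frac{p}{p-1}$ gives
\begin{equation*}
\left\Vert\nabla\mathbf{v}\right\Vert_{L^1}\leq \gamma\left\Vert\left\vert\mathbf{u}\right\vert^{\gamma-1}\right\Vert_{L^{p'}}\left\Vert\nabla\mathbf{u}\right\Vert_{L^p},
\end{equation*}
and one checks that $(\gamma-1)p'=\frac{pN}{N-p}$, so the first factor equals $\left\Vert\mathbf{u}\right\Vert_{L^{pN/(N-p)}}^{\gamma-1}$. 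Combining the two displays and dividing by $\left\Vert\mathbf{u}\right\Vert_{L^{pN/(N-p)}}^{\gamma-1}$, which is finite and may be assumed nonzero, yields the claimed estimate with $C=C(N,p)$.

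The main technical obstacle is the iterated integration in the $p=1$ case: one must organize the successive applications of the generalized H\"older inequality carefully so that at each step exactly the right factor is extracted and the induction on the number of integrated variables closes cleanly. The passage to general $p$ is then essentially bookkeeping, the only subtlety being to verify that $\left\vert\mathbf{u}\right\vert^{\gamma}$ is genuinely $C^1$ (which holds because $\gamma>1$) and that all norms involved are finite owing to the compact support of $\mathbf{u}$, so that the final division is legitimate.
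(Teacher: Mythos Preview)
Your argument is correct and is in fact the classical Nirenberg proof: establish the endpoint $p=1$ by the iterated-integration/generalized H\"older scheme, then bootstrap to $1<p<N$ by applying the $p=1$ case to $|\mathbf{u}|^{\gamma}$ with $\gamma=\frac{p(N-1)}{N-p}$. The algebra $(\gamma-1)p'=\frac{pN}{N-p}$ and $\gamma\cdot\frac{N}{N-1}=\frac{pN}{N-p}$ is right, and your remark that $\gamma>1$ guarantees $|\mathbf{u}|^{\gamma}\in C_0^1$ handles the only regularity subtlety. You also correctly observe that the stated range $1\leq p\leq N$ should be $1\leq p<N$, since the Sobolev exponent $\frac{pN}{N-p}$ degenerates at $p=N$.

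As for comparison with the paper: there is nothing to compare. The paper does not prove this theorem; it merely quotes it from the literature (reference \cite{gagli}) as a tool to be used later in the proof of Lemma~\ref{lema2}. Your write-up therefore supplies strictly more than the paper does on this point.
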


\begin{theorem}\label{teo2}{\normalfont \cite{Jo1} (Gronwall inequality)} Let
	$u$ and $v$ be two integrable functions and $g$ a continuous function, with domain
	$[0,T]$. Let $\psi \in C^{1}\left[ 0,T\right] $ be an increasing
	function such that $\psi ^{\prime }\left( t\right) \neq 0,$ $ t\in
	\left[ 0,T\right]$. Assume that functions $u$ and $v$ are  non-negative and $g$
	is non-negative and non-decreasing. If
\begin{equation*}
\mathbf{u}\left( t\right) \leq \mathbf{v}\left( t\right) +g\left( t\right) \int_{0}^{T}\psi'\left( \tau \right) \left( \psi \left( t\right) -\psi \left( \tau \right)
\right) ^{\alpha-1}\mathbf{v}\left( \tau \right) d\tau
\end{equation*}
$t\in \lbrack 0,T)$, and as $v$ is a non-decreasing function over $\left[ 0,T\right]$, then
\begin{equation}\label{eq6}
\mathbf{u}\left( t\right) \leq \mathbf{v}\left( t\right) \mathbb{E}_{\alpha }\left( g\left( t\right)
\Gamma \left( \alpha \right) \left[ \psi \left( T\right) -\psi \left(
0\right) \right] ^{\alpha }\right) ,\forall t\in \left[ 0,T\right]
\end{equation}
where $\mathbb{E}_{\alpha }\left( \cdot \right) $ is a Mittag-Leffler function with one parameter, given by 
$\mathbb{E}_{\alpha }\left( t\right) =\overset{\infty }{\underset{k=0}{\sum }}\dfrac{t^{k}}{\Gamma \left( \alpha k+1\right) }$, with $0<\alpha<1$.

\end{theorem}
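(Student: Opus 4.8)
The plan is to establish \eqref{eq6} by the classical method of successive substitutions, i.e.\ a fractional Picard iteration applied to the inequality itself. Introduce the order-preserving linear operator
\[
(\mathcal{B}\phi)(t):=g(t)\int_{0}^{t}\psi'(\tau)\bigl(\psi(t)-\psi(\tau)\bigr)^{\alpha-1}\phi(\tau)\,d\tau ,
\]
so that the hypothesis reads $\mathbf{u}(t)\le \mathbf{v}(t)+(\mathcal{B}\mathbf{u})(t)$ on $[0,T)$ (with the unknown under the integral, as in the standard fractional Gronwall inequality). Since $g\ge 0$, $\mathcal{B}$ preserves non-negativity and is monotone on non-negative functions, so substituting the inequality into itself $n$ times yields
\[
\mathbf{u}(t)\le \sum_{k=0}^{n-1}(\mathcal{B}^{k}\mathbf{v})(t)+(\mathcal{B}^{n}\mathbf{u})(t),\qquad \mathcal{B}^{0}\mathbf{v}:=\mathbf{v}.
\]
It then remains to (i) dominate the partial sum by the Mittag-Leffler series and (ii) show that the remainder $(\mathcal{B}^{n}\mathbf{u})(t)\to 0$ as $n\to\infty$.

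For (i), I would prove by induction on $k$ that
\[
(\mathcal{B}^{k}\mathbf{v})(t)\le \mathbf{v}(t)\,\frac{\bigl(g(t)\Gamma(\alpha)\bigr)^{k}}{\Gamma(k\alpha+1)}\bigl(\psi(t)-\psi(0)\bigr)^{k\alpha}.
\]
The case $k=0$ is trivial. In the inductive step one inserts the bound for $\mathcal{B}^{k}\mathbf{v}$, uses that $g$ is non-decreasing and $\mathbf{v}$ is non-decreasing to pull $g(t)$ and $\mathbf{v}(t)$ out of the integral, and is left with $\int_{0}^{t}\psi'(\tau)\bigl(\psi(t)-\psi(\tau)\bigr)^{\alpha-1}\bigl(\psi(\tau)-\psi(0)\bigr)^{k\alpha}\,d\tau$. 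The substitution $s=\psi(\tau)$ followed by $s=\psi(0)+(\psi(t)-\psi(0))r$ converts this into $\bigl(\psi(t)-\psi(0)\bigr)^{(k+1)\alpha}$ times the Euler Beta integral $B(\alpha,k\alpha+1)=\Gamma(\alpha)\Gamma(k\alpha+1)/\Gamma((k+1)\alpha+1)$, which produces exactly the asserted bound for $k+1$. Summing over $k$ and recalling $\mathbb{E}_\alpha(z)=\sum_{k\ge 0}z^{k}/\Gamma(\alpha k+1)$, the partial sums are dominated by $\mathbf{v}(t)\,\mathbb{E}_\alpha\bigl(g(t)\Gamma(\alpha)(\psi(t)-\psi(0))^{\alpha}\bigr)$.

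For (ii), set $M:=\max_{[0,T]}g<\infty$ (finite, as $g$ is continuous on a compact interval) and prove by induction, using the same Beta-function convolution identity together with Fubini's theorem (this is the $\psi$-analogue of the semigroup law $I^{\alpha}I^{\beta}=I^{\alpha+\beta}$ for Riemann--Liouville integrals), that
\[
(\mathcal{B}^{n}\mathbf{u})(t)\le \frac{\bigl(M\Gamma(\alpha)\bigr)^{n}}{\Gamma(n\alpha)}\int_{0}^{t}\psi'(\tau)\bigl(\psi(t)-\psi(\tau)\bigr)^{n\alpha-1}\mathbf{u}(\tau)\,d\tau .
\]
Once $n\alpha\ge 1$ the kernel is bounded by $(\psi(T)-\psi(0))^{n\alpha-1}$, so the right-hand side is at most $(M\Gamma(\alpha))^{n}(\psi(T)-\psi(0))^{n\alpha-1}\|\mathbf{u}\|_{L^{1}(0,T)}/\Gamma(n\alpha)\to 0$. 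Letting $n\to\infty$ in the iterated inequality gives $\mathbf{u}(t)\le \mathbf{v}(t)\,\mathbb{E}_\alpha\bigl(g(t)\Gamma(\alpha)(\psi(t)-\psi(0))^{\alpha}\bigr)$; since $\psi$ is increasing, $\psi(t)-\psi(0)\le \psi(T)-\psi(0)$, and since $\mathbb{E}_\alpha$ has non-negative Taylor coefficients it is non-decreasing on $[0,\infty)$, so $\psi(t)$ may be replaced by $\psi(T)$, which is \eqref{eq6}.

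The step I expect to be the main obstacle is (ii): the decay of the remainder hinges on the precise iterated-kernel estimate, whose proof requires repeated use of the Beta-function identity, a Fubini interchange that needs the integrability of $\mathbf{u}$, and some care near the singularity at $\tau=t$ when $n\alpha<1$. Part (i) is a routine though slightly lengthy induction, and the closing comparison with $\psi(T)$ is immediate.
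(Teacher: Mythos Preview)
The paper does not supply a proof of this theorem; it is quoted verbatim from \cite{Jo1} and used as a black box in Section~3. Your argument is the standard Picard-iteration proof of the $\psi$-fractional Gronwall inequality (the same scheme used in \cite{Jo1} and in the classical Ye--Gao--Ding case $\psi(t)=t$), and it is correct: the Beta-function induction for the iterates $\mathcal{B}^{k}\mathbf{v}$ and the decay of the remainder via the factor $1/\Gamma(n\alpha)$ are precisely the right ingredients, and the final monotonicity step replacing $\psi(t)$ by $\psi(T)$ is immediate. You also correctly interpret the hypothesis with the unknown $\mathbf{u}$ (not $\mathbf{v}$) under the integral and with upper limit $t$ (not $T$); the printed version is an evident misprint, since with upper limit $T$ the kernel $(\psi(t)-\psi(\tau))^{\alpha-1}$ is not even defined for $\tau>t$, and with $\mathbf{v}$ under the integral the conclusion would follow trivially from the first two terms of the Mittag-Leffler series without any iteration. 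One small remark on step~(ii): when you bound the remaining integral, note that $\int_{0}^{t}\psi'(\tau)\mathbf{u}(\tau)\,d\tau\le \bigl(\max_{[0,T]}\psi'\bigr)\,\|\mathbf{u}\|_{L^{1}(0,T)}$, so the finiteness of $\psi'$ on the compact interval (from $\psi\in C^{1}[0,T]$) is what makes the $L^{1}$-norm appear; this is implicit in your write-up but worth making explicit.
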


Let $h:\mathbb{R}^{N}\times \lbrack 0,T)\rightarrow \mathbb{R}^{N}$. The
Riemann-Liouville fractional integral of order $\alpha \in (0,1]$ of 
function $h$ is defined as \cite{Jo2,van,Jo3}

\begin{equation*}
I_{t}^{\alpha }h\left( x,t\right) =\frac{1}{\Gamma \left( \alpha \right) } \int_{0}^{t}\left( t-\tau \right) ^{\alpha -1}h\left( x,\tau \right) d\tau ,\text{ }t>0 .
\end{equation*}
Besides, the Caputo fractional derivative of order $\alpha $ of function $q$, is given by \cite{Jo2,van,Jo3}
\begin{equation*}
^{C}\mathbb{D}_{t}^{\alpha }h\left( x,t\right) :=\partial _{t}^{\alpha }h\left(
x,t\right) =\frac{1}{\Gamma \left( \alpha \right) }\int_{0}^{t}\left( t-\tau
\right) ^{\alpha -1}\frac{\partial }{\partial \tau}h\left( x,\tau \right) d\tau
,\text{ }t>0.
\end{equation*}

Let $M_{\alpha }$ be the Mainardi function, given by \cite{mild}
\begin{equation*}
M_{\alpha }\left( \theta \right) =\underset{k=0}{\overset{\infty }{\sum }}\frac{\theta ^{n}}{n!\Gamma \left( 1-\alpha \left( 1+n\right) \right) }.
\end{equation*}

This function is a particular case of Wright's function. The following
proposition presents a classical result about Mainardi function. 

\begin{proposition} For $\alpha \in \left( 0,1\right)$, $-1<r<\infty$ and $M_{\alpha }$ restricted to positive real line, $M_{\alpha }\left( t\right) \geq 0$ for all $t\geq 0$, we have
\begin{equation*} 
\int_{0}^{\infty
}t^{r}M_{\alpha }\left( t\right) dt=\frac{\Gamma \left( r+1\right) }{\Gamma
\left( \alpha r+1\right) }.
\end{equation*}
\end{proposition}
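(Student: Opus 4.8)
The plan is to prove the slightly more general statement that the Mellin transform of $M_{\alpha}$ is $\Gamma(s)/\Gamma(1-\alpha+\alpha s)$, the asserted identity being the case $s=r+1$. The cleanest self-contained route uses the classical Hankel-path representation of the reciprocal Gamma function,
\begin{equation*}
\frac{1}{\Gamma(\zeta)}=\frac{1}{2\pi i}\int_{Ha}e^{\sigma}\sigma^{-\zeta}\,d\sigma,\qquad \zeta\in\mathbb{C},
\end{equation*}
where $Ha$ is a Hankel contour coming from infinity with argument $-\gamma$, encircling the origin counterclockwise, and going back to infinity with argument $+\gamma$, for a fixed $\gamma\in(\pi/2,\pi)$. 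Inserting this (with $\zeta=1-\alpha(1+n)$) into the series defining $M_{\alpha}$ and interchanging the summation with the contour integral --- legitimate because $\sum_{n}(t\sigma^{\alpha})^{n}/n!$ converges uniformly on compact pieces of $Ha$ while on the rays the exponential factor $e^{\sigma}$, together with $\alpha<1$, dominates the tail $t|\sigma|^{\alpha}$ --- yields the integral representation
\begin{equation*}
M_{\alpha}(t)=\frac{1}{2\pi i}\int_{Ha}\sigma^{\alpha-1}e^{\,\sigma-t\sigma^{\alpha}}\,d\sigma,\qquad t>0 .
\end{equation*}

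Next I would multiply by $t^{r}$ and integrate over $t\in(0,\infty)$. Before that I deform $Ha$ so that its two rays make angles $\pm\gamma$ with $\pi/2<\gamma<\min\{\pi,\pi/(2\alpha)\}$; this interval is nonempty exactly because $\alpha<1$, and along such a contour $\mathrm{Re}(\sigma^{\alpha})>0$ on the rays while $e^{\sigma}$ still decays exponentially there (since $\cos\gamma<0$). Then Fubini applies --- the double integral is absolutely convergent because $|\sigma|^{\alpha-1}(\mathrm{Re}\,\sigma^{\alpha})^{-r-1}e^{\mathrm{Re}\,\sigma}$ is integrable along the deformed $Ha$ (harmless at the origin since $\alpha>0$, and on the rays since $\cos\gamma<0$ and $\alpha<1$), and the inner integral is $\int_{0}^{\infty}t^{r}e^{-t\sigma^{\alpha}}\,dt=\Gamma(r+1)\sigma^{-\alpha(r+1)}$, valid for $r>-1$. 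Hence
\begin{equation*}
\int_{0}^{\infty}t^{r}M_{\alpha}(t)\,dt=\Gamma(r+1)\cdot\frac{1}{2\pi i}\int_{Ha}\sigma^{\alpha-1-\alpha(r+1)}e^{\sigma}\,d\sigma=\Gamma(r+1)\cdot\frac{1}{2\pi i}\int_{Ha}\sigma^{-(\alpha r+1)}e^{\sigma}\,d\sigma ,
\end{equation*}
and a final application of the Hankel formula with $\zeta=\alpha r+1$ identifies the right-hand side with $\Gamma(r+1)/\Gamma(\alpha r+1)$.

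The step I expect to be the main obstacle is the rigorous justification of the two interchanges above --- the series/contour swap producing the integral representation of $M_{\alpha}$, and the Fubini swap in $t$ and $\sigma$. Both rest on the single structural feature $0<\alpha<1$, which is precisely what permits a contour lying simultaneously in the left half-plane (so that $e^{\sigma}$ decays) and in the sector $\{\mathrm{Re}\,\sigma^{\alpha}>0\}$ (so that $\int_{0}^{\infty}t^{r}e^{-t\sigma^{\alpha}}\,dt$ converges); the hypothesis $r>-1$ enters only to guarantee that $t^{r}M_{\alpha}(t)$ is integrable near $t=0$, where $M_{\alpha}$ is bounded, and that the inner integral exists. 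If one prefers to invoke standard facts, the same ratio of Gamma functions follows from the Laplace transform $\int_{0}^{\infty}e^{-\lambda t}M_{\alpha}(t)\,dt=\mathbb{E}_{\alpha}(-\lambda)$, the Mellin transform of $\mathbb{E}_{\alpha}(-\lambda)$, and the identity $\mathcal{M}[\mathcal{L}f](z)=\Gamma(z)\,\mathcal{M}[f](1-z)$; for integer $r=n$ it reduces to $(-1)^{n}\frac{d^{n}}{d\lambda^{n}}\mathbb{E}_{\alpha}(-\lambda)\big|_{\lambda=0}=n!/\Gamma(\alpha n+1)$.
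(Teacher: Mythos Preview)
Your Hankel--contour argument is correct and is in fact one of the standard derivations of this moment identity; the two interchanges you flag are exactly the points requiring care, and your choice of opening angle $\gamma\in(\pi/2,\min\{\pi,\pi/(2\alpha)\})$ is precisely what makes both legitimate when $0<\alpha<1$ and $r>-1$.

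There is, however, nothing in the paper to compare your proof against: the proposition is presented there as a ``classical result about Mainardi function'' and is stated without proof, the definition of $M_{\alpha}$ being attributed to the cited reference. One minor remark worth recording: your integral representation $M_{\alpha}(t)=\frac{1}{2\pi i}\int_{Ha}\sigma^{\alpha-1}e^{\sigma-t\sigma^{\alpha}}\,d\sigma$ corresponds to the standard Mainardi series $\sum_{n\ge 0}(-t)^{n}/\bigl(n!\,\Gamma(1-\alpha(1+n))\bigr)$, whereas the series displayed in the paper omits the alternating sign. That omission is evidently a typographical slip --- without the $(-1)^{n}$ the function is neither nonnegative on $[0,\infty)$ nor does it have finite moments --- and your computation is the one appropriate to the correctly signed definition.
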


The mild solution for Eq.(\ref{eq2}), is given by the following integral
equation \cite{mild1}:
\begin{equation}\label{eq7}
\mathbf{u}\left( t\right) =\mathbb{E}_{\alpha }\left( t^{\alpha }\Delta \right) \mathbf{u}_{0}-\int_{0}^{t}\left( t-\tau \right) ^{\alpha -1}\mathbb{E}_{\alpha ,\alpha }\left( \left( t-\tau \right) ^{\alpha }\Delta \right) \mathbb{P}\nabla \cdot\left( \mathbf{u}\otimes \mathbf{u}\right) \left( \tau \right) d\tau 
\end{equation}
where
\begin{equation*}
\mathbb{E}_{\alpha }\left( t^{\alpha }\Delta \right) \mathbf{v}\left( x\right) =\left( \left( 4\pi t^{\alpha }\right) ^{-\frac{N}{2}}\int_{0}^{\infty }\theta ^{-\frac{N}{2}}M_{\alpha }\left( \theta \right) \exp \left( \frac{-\left\vert \cdot \right\vert ^{2}}{4\theta t^{2}}\right) d\theta \ast \mathbf{v}\right) \left(
x\right) 
\end{equation*}
and
\begin{equation*}
\mathbb{E}_{\alpha ,\alpha }\left( t^{\alpha }\Delta \right) \mathbf{v}\left( x\right) =\left( \left( 4\pi t^{\alpha }\right) ^{-\frac{N}{2}}\int_{0}^{\infty }\alpha \theta ^{1-\frac{N}{2}}M_{\alpha }\left( \theta \right) \exp \left( \frac{-\left\vert \cdot \right\vert ^{2}}{4\theta t^{2}}\right) d\theta \ast
\mathbf{v}\right) \left( x\right),
\end{equation*}
with $\mathbb{E}_{\alpha ,\beta }\left(t\right) =\overset{\infty
}{\underset{k=0}{\sum }}\dfrac{t^{k}}{\Gamma \left( \alpha k+\beta \right) }$, 
$0<\alpha<1$ and $0<\beta<1$.

The mild solution $u\in C\left( [0,T),L^{N}\left( \mathbb{R}^{N}\right) ^{N}\right) $ is associated with the initial condition $\mathbf{u}_{0}\in L^{N}\left(\mathbb{R}^{N}\right) ^{N}$ as $\nabla \cdot \mathbf{u}_{0}=0$.

Before investigating our main result, we need the results presented in Lemma \ref{lema1} and Lemma \ref{lema2}, below.

\begin{lemma}\label{lema1} Let $1<p,q<\infty $, $0<T<\infty$. If $h\in L^{q}\left( \left(0,T\right) ;L^{p}\left( \mathbb{R}^{N}\right) ^{N}\right)$, the function
\begin{equation}\label{eq8}
\mathbf{u}\left( t\right) =\mathbb{E}_{\alpha }\left( t^{\alpha }\Delta \right) \mathbf{u}_{0}+\int_{0}^{t}\left( t-\tau \right) ^{\alpha -1}\mathbb{E}_{\alpha ,\alpha }\left( \left( t-\tau \right) ^{\alpha }\Delta \right) \mathbb{P}h\left( \tau \right)d\tau 
\end{equation}
belongs to $L^{q}\left( \left( 0,T\right) ;L^{p}\left( \mathbb{R}^{N}\right) ^{N}\right) $ and solves the following Cauchy problem:
\begin{equation}\label{eq91}
\left\{ 
\begin{array}{rll}
^{C}\mathbb{D}_{t}^{\alpha }\mathbf{u}-\Delta \mathbf{u} & = & \mathbb{P}h\text{ for almost everywhere }t\in \left(
0,T\right)  \\ 
\mathbf{u}\left( 0\right)  & = & 0
\end{array}
\right. 
\end{equation}

In addition, the solution $u$ satisfies the estimate
\begin{equation}\label{eq9}
\left\Vert \Delta \mathbf{u}\right\Vert _{p,q,T}\leq C\left\Vert h\right\Vert _{p,q,T}
\end{equation}
with $C=C\left( p,N,q\right) >0$ independent of $h$ and $T$.
\end{lemma}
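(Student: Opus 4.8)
Since \eqref{eq91} prescribes $\mathbf{u}(0)=0$, one takes $\mathbf{u}_{0}=0$ in \eqref{eq8}: the summand $\mathbb{E}_{\alpha}(t^{\alpha}\Delta)\mathbf{u}_{0}$ is recorded only for the version of the problem with non-zero data, and by the subordination integral displayed before the lemma, together with $\int_{0}^{\infty}M_{\alpha}(\theta)\,d\theta=1$ (the Proposition with $r=0$) and the $L^{p}$-contractivity of the heat semigroup $e^{t\Delta}$, it solves the homogeneous equation and belongs to $L^{\infty}\bigl((0,T);L^{p}(\mathbb{R}^{N})^{N}\bigr)\subset L^{q}\bigl((0,T);L^{p}(\mathbb{R}^{N})^{N}\bigr)$ as soon as $\mathbf{u}_{0}\in L^{p}(\mathbb{R}^{N})^{N}$. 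It therefore suffices to treat the Duhamel term, which I write as the operator convolution $\mathbf{u}=k\ast(\mathbb{P}h)$ with kernel $k(s):=s^{\alpha-1}\mathbb{E}_{\alpha,\alpha}(s^{\alpha}\Delta)$, extended by $0$ for $s\le0$ and with $h$ extended by $0$ outside $(0,T)$. Two things must be shown: that $\mathbf{u}$ satisfies \eqref{eq91}, and the estimate \eqref{eq9}.

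For the first, I would apply the Laplace transform $\mathscr{L}$ in $t$. The scalar identities $\mathscr{L}\bigl[s^{\alpha-1}\mathbb{E}_{\alpha,\alpha}(\mu s^{\alpha})\bigr](\lambda)=(\lambda^{\alpha}-\mu)^{-1}$ and $\mathscr{L}\bigl[\mathbb{E}_{\alpha}(\mu s^{\alpha})\bigr](\lambda)=\lambda^{\alpha-1}(\lambda^{\alpha}-\mu)^{-1}$, read through the spectral calculus of $\Delta$ on $L^{p}$, give $\mathscr{L}[\mathbf{u}](\lambda)=(\lambda^{\alpha}I-\Delta)^{-1}\mathscr{L}[\mathbb{P}h](\lambda)$, hence
\begin{equation*}
\lambda^{\alpha}\,\mathscr{L}[\mathbf{u}](\lambda)-\Delta\,\mathscr{L}[\mathbf{u}](\lambda)=\mathscr{L}[\mathbb{P}h](\lambda).
\end{equation*}
Since $\mathbf{u}(0)=0$, the left side equals $\mathscr{L}\bigl[{}^{C}\mathbb{D}_{t}^{\alpha}\mathbf{u}-\Delta\mathbf{u}\bigr](\lambda)$, and injectivity of $\mathscr{L}$ yields \eqref{eq91} for a.e.\ $t\in(0,T)$; the time-regularity of $\mathbf{u}$ needed to make ${}^{C}\mathbb{D}_{t}^{\alpha}\mathbf{u}$ meaningful is furnished by the estimate proved below. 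Equivalently one may differentiate \eqref{eq8} directly, using the two subordination representations and the Mainardi-function identity of the Proposition to interchange the $\theta$- and $\tau$-integrations.

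For the estimate, applying $\Delta$ under the integral and subordinating via $\mathbb{E}_{\alpha,\alpha}(s^{\alpha}\Delta)=\int_{0}^{\infty}\alpha\theta\,M_{\alpha}(\theta)\,e^{s^{\alpha}\theta\Delta}\,d\theta$ gives
\begin{equation*}
\Delta\mathbf{u}(t)=\int_{0}^{t}\!\!\int_{0}^{\infty}\alpha\theta\,M_{\alpha}(\theta)\,(t-\tau)^{\alpha-1}\,\Delta e^{(t-\tau)^{\alpha}\theta\Delta}\,\mathbb{P}h(\tau)\,d\theta\,d\tau .
\end{equation*}
Using the analyticity ($L^{p}$--$L^{q}$, Giga--Shor-type) smoothing bounds $\Vert\Delta e^{\sigma\Delta}\Vert_{\mathcal{L}(L^{p})}\le C\sigma^{-1}$ and $\Vert\Delta^{2}e^{\sigma\Delta}\Vert_{\mathcal{L}(L^{p})}\le C\sigma^{-2}$ (valid for $1<p<\infty$), the $L^{p}$-boundedness of the Helmholtz--Leray projector $\mathbb{P}$, and $\int_{0}^{\infty}M_{\alpha}(\theta)\,d\theta=1$, one obtains the operator-kernel estimates $\Vert\Delta k(s)\mathbb{P}\Vert_{\mathcal{L}(L^{p})}\le C\,s^{-1}$ and, after one more differentiation in $s$, $\Vert\tfrac{d}{ds}(\Delta k(s)\mathbb{P})\Vert_{\mathcal{L}(L^{p})}\le C\,s^{-2}$; thus $s\mapsto\Delta k(s)\mathbb{P}$ is a Calder\'on--Zygmund convolution kernel with values in $\mathcal{L}\bigl(L^{p}(\mathbb{R}^{N})^{N}\bigr)$.

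The main obstacle is that $s^{-1}$ is \emph{not} integrable at $s=0$, so Young's inequality does not yield \eqref{eq9}: this is a genuinely singular estimate. To close it I would regard $h\mapsto\Delta\mathbf{u}$ as a convolution operator on $L^{q}\bigl(\mathbb{R};L^{p}(\mathbb{R}^{N})^{N}\bigr)$ and invoke the vector-valued Calder\'on--Zygmund theorem --- equivalently the operator-valued Mikhlin multiplier theorem applied to the symbol $\lambda\mapsto\Delta(\lambda^{\alpha}-\Delta)^{-1}=-I+\lambda^{\alpha}(\lambda^{\alpha}-\Delta)^{-1}$ --- using that $L^{p}(\mathbb{R}^{N})^{N}$ is a UMD space for $1<p<\infty$ and that $-\Delta$ is $\mathcal{R}$-sectorial of angle $0$ on it, so that the operator families appearing are $\mathcal{R}$-bounded; the convolution is then bounded on $L^{q}(\mathbb{R};L^{p})$ for $1<q<\infty$ with norm depending only on $p,q,N,\alpha$, and restriction to $(0,T)$ gives \eqref{eq9} with $C$ independent of $h$ and $T$. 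That $\mathbf{u}$ itself lies in $L^{q}\bigl((0,T);L^{p}(\mathbb{R}^{N})^{N}\bigr)$ is then immediate from $\Vert k(s)\Vert_{\mathcal{L}(L^{p})}\le Cs^{\alpha-1}\in L^{1}(0,T)$ and Young's inequality (a $T$-dependent constant being acceptable there), and the identity ${}^{C}\mathbb{D}_{t}^{\alpha}\mathbf{u}=\Delta\mathbf{u}+\mathbb{P}h$ combined with \eqref{eq9} bounds $\Vert{}^{C}\mathbb{D}_{t}^{\alpha}\mathbf{u}\Vert_{p,q,T}$ by $C\Vert h\Vert_{p,q,T}$ as well.
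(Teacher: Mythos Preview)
Your proposal is correct and considerably more complete than the paper's own argument. The paper does not carry out the analysis you describe; instead it rewrites \eqref{eq91} with an explicit pressure term, records the embedding $\|\nabla^{2}\mathbf{u}\|_{p,q,T}\le C\|A_{q}\mathbf{u}\|_{p,q,T}$, and then asserts that the fractional case follows by the same steps as Giga--Sohr's classical maximal-regularity theorem (their Theorem~2.7), citing separate references for the domain types $\Omega_{1}$--$\Omega_{4}$. The concluding chain in the paper,
\[
\|\Delta\mathbf{u}\|_{p,q,T}\le\|{}^{C}\mathbb{D}_{t}^{\alpha}\mathbf{u}\|_{p,q,T}+\|\mathbb{P}h\|_{p,q,T}\le C\|h\|_{p,q,T},
\]
already presupposes a bound on $\|{}^{C}\mathbb{D}_{t}^{\alpha}\mathbf{u}\|_{p,q,T}$ in terms of $\|h\|_{p,q,T}$, which is essentially equivalent to what is being proved; the argument as written is therefore a sketch that defers the substantive work to the cited literature.

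Your route is genuinely different. You verify \eqref{eq91} directly via the Laplace transform and the resolvent identity $\mathscr{L}[\mathbf{u}](\lambda)=(\lambda^{\alpha}-\Delta)^{-1}\mathscr{L}[\mathbb{P}h](\lambda)$, and you obtain \eqref{eq9} by regarding $h\mapsto\Delta\mathbf{u}$ as a singular convolution on $L^{q}(\mathbb{R};L^{p})$: the Mainardi subordination together with the heat-semigroup smoothing bounds gives the Calder\'on--Zygmund kernel estimates $\|\Delta k(s)\mathbb{P}\|\le Cs^{-1}$ and $\|\tfrac{d}{ds}\Delta k(s)\mathbb{P}\|\le Cs^{-2}$, and the operator-valued Mikhlin theorem (using that $L^{p}(\mathbb{R}^{N})^{N}$ is UMD and $-\Delta$ is $\mathcal{R}$-sectorial of angle $0$) closes the $L^{q}$-boundedness. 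This is the standard modern maximal-regularity machinery and yields a self-contained proof modulo those abstract theorems. What the paper's approach buys is brevity and a direct pointer to the classical $\alpha=1$ Giga--Sohr estimate; what yours buys is an actual argument that does not leave the reader to reconstruct the fractional analogue on their own.
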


In the proof of Lemma \ref{lema1}, we will use the following definitions:
\begin{enumerate}
    \item $\Omega _{1}=\mathbb{R}^{N}$;
    \item $\Omega _{2}=$ limited domain;
    \item $\Omega _{3}=$ half space;
    \item $\Omega _{4}=$ external domain of $\mathbb{R}^{N}$.
\end{enumerate}

The proof shall be adapted from the proof of Theorem 2.7 \cite{giga}. The
result ensures that, if $\Omega \subseteq \mathbb{R}^{N}$ satisfies one of the
definitions $\Omega _{1}$-$\Omega _{4}$,  then the solution
$u$ of the Navier-Stokes equation is unique.

\begin{proof}
Indeed, as we have seen earlier, we have been able to rewrite the $N$-dimensional time-fractional Navier-Stokes equation Eq.(\ref{eq1}) in the form of Eq.(\ref{eq2}). On the other hand, Eq.(\ref{eq91}) can be written as follows: 
\begin{equation}\label{eq112}
\left\{ 
\begin{array}{rll}
^{C}\mathbb{D}_{t}^{\alpha }\mathbf{u}-\Delta \mathbf{u}+\nabla \mathbf{p} & = & h \\ 
\nabla \cdot\mathbf{u} & = & 0 \\ 
\mathbf{u}\left( x,0\right)  & = & \mathbf{u}_{0}.
\end{array}%
\right. 
\end{equation}

Next, we will use the embedding property for the second-order derivative $\Delta \mathbf{u}=\nabla ^{2}\mathbf{u}=\left( \partial _{j}\partial _{j}\mathbf{u}\right) ,$ $j=1,...,m,$
\begin{equation}\label{eq113}
\left\Vert \Delta \mathbf{u}\right\Vert _{p,q,T}=\left\Vert \nabla ^{2}\mathbf{u}\right\Vert _{p,q,T}\leq C\left\Vert A_{q}\right\Vert _{p,q,T},\text{ }\mathbf{v}\in D\left( A_{q}\right) 
\end{equation}
where $A_{q}=-\Delta \mathbf{u}.$

This applies to $C=C\left( p,q,N\right) >0$ for $1<p,q<\infty $ if $\Omega
	\subseteq \mathbb{R}^{N}$ satisfies one of the definitions $\Omega
	_{1}$-$\Omega _{3}$ and for $1<p,q<\frac{N}{2}$ in $\Omega _{4}$.

In fact, the result for the case $\Omega _{1}$ follows from Lemma 3.1
	\cite{23}; for $\Omega _{2}$, it follows from Lemma \ref{lema1}
	\cite{20,11}. The uniqueness in case $\Omega _{3}$ follows from Theorem
	3.6 \cite{3}. For the case $\Omega _{4}$, see \cite{42}.

In this sense, applying Eq.(\ref{eq113}) in Eq.(\ref{eq112}) and using
	$\nabla \mathbf{p}=g-$ $^{C}\mathbb{D}_{t}^{\alpha }+\Delta
	\mathbf{u}$, we obtain the following result:

\begin{eqnarray}
\left\Vert \Delta \mathbf{u}\right\Vert _{p,q,T} &\leq &\left\Vert ^{C}\mathbb{D}_{t}^{\alpha }\mathbf{u}\right\Vert _{p,q,T}+\left\Vert Ph\right\Vert _{p,q,T}  \notag \\ 
&\leq &C\left\Vert h\right\Vert _{p,q,T}.
\end{eqnarray}

This completes the demonstration.
\end{proof}
 
In the proof of Lemma 1 the authors Giga and Sohr assumed that $\Omega$
has an external domain, that is, a domain whose complement in $\mathbb{R}^{N}$
is a non-empty compact set. But since $\Omega = \mathbb{R}^{N}$ is all space,
Lemma \ref{lema1} has been proved following the same steps as Theorem 2.7
\cite{giga}.

\begin{lemma}\label{lema2}
	Let $g\in L^{q}\left( \left( 0,T\right) ;L^{p}\left(
	\mathbb{R}^{N}\right) ^{N^{2}}\right) $ where $1<p,q<\infty ,$
	$0<T<\infty $. Then, there exists a unique solution $\mathbf{v}=\left(
	-\Delta \right) ^{-1/2}\mathbf{u}$ belonging to $L^{q}\left( \left(
	0,T\right) ;L^{p}\left( \mathbb{R}^{N}\right) ^{N}\right) $ which
	solves the Cauchy problem
\begin{equation}\label{eq100}
\left\{ 
\begin{array}{rll}
^{C}\mathbb{D}_{t}^{\alpha }\mathbf{v}-\Delta \mathbf{v} & = & \mathbb{P}\left( -\Delta \right) ^{-1/2}\nabla \cdot h
\text{, almost everywhere }t\in \left( 0,T\right)  \\ 
\mathbf{v}\left( 0\right)  & = & 0,
\end{array}
\right.
\end{equation}
and satisfies the following estimates:
\begin{equation*}
\left\Vert \nabla \mathbf{u}\right\Vert _{p,q,T}\leq C\left\Vert h\right\Vert _{p,q,T}%
\end{equation*}
and
\begin{equation}\label{eq12}
\left\Vert \mathbf{u}\right\Vert _{\frac{pN}{N-p},q,T}\leq C\left\Vert h\right\Vert
_{p,q,T},\text{ }1<p<N , 
\end{equation}
with $C=C\left( p,N,q\right) >0$ independent of $h$ and $T$.
\end{lemma}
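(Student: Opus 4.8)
The plan is to reduce the claim to Lemma \ref{lema1} by absorbing the first-order operator $(-\Delta)^{-1/2}\nabla\cdot$ into the source term and then invoking the $L^{p}$-boundedness of the Riesz transforms. I would first set $\widetilde{h}:=(-\Delta)^{-1/2}\nabla\cdot h$; since each component of $(-\Delta)^{-1/2}\nabla\cdot$ is, up to a sign, one of the Riesz transforms $R_{j}$, and these are bounded on $L^{p}(\mathbb{R}^{N})$ for $1<p<\infty$, we get $\widetilde{h}\in L^{q}\big((0,T);L^{p}(\mathbb{R}^{N})^{N}\big)$ with $\|\widetilde{h}\|_{p,q,T}\leq C\|h\|_{p,q,T}$. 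Applying Lemma \ref{lema1} with source $\widetilde{h}$ then produces a solution $\mathbf{v}\in L^{q}\big((0,T);L^{p}(\mathbb{R}^{N})^{N}\big)$ of $^{C}\mathbb{D}_{t}^{\alpha}\mathbf{v}-\Delta\mathbf{v}=\mathbb{P}\widetilde{h}$ with $\mathbf{v}(0)=0$, which is precisely \eqref{eq100} since $\mathbb{P}\widetilde{h}=\mathbb{P}(-\Delta)^{-1/2}\nabla\cdot h$; moreover the maximal-regularity estimate \eqref{eq9} gives $\|\Delta\mathbf{v}\|_{p,q,T}\leq C\|\widetilde{h}\|_{p,q,T}\leq C\|h\|_{p,q,T}$.

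Next I would set $\mathbf{u}:=(-\Delta)^{1/2}\mathbf{v}$, so that $\mathbf{v}=(-\Delta)^{-1/2}\mathbf{u}$ as required, and read off the two estimates. For the gradient bound, on the Fourier side $\widehat{\nabla_{j}\mathbf{u}}(\xi)=i\xi_{j}|\xi|\,\widehat{\mathbf{v}}(\xi)=-\dfrac{i\xi_{j}}{|\xi|}\big(-|\xi|^{2}\widehat{\mathbf{v}}(\xi)\big)=-\widehat{R_{j}\Delta\mathbf{v}}(\xi)$, i.e.\ $\nabla\mathbf{u}=-R\,\Delta\mathbf{v}$; these multiplier manipulations are legitimate because Lemma \ref{lema1} already places $\mathbf{v}$ and $\Delta\mathbf{v}$ in genuine $L^{p}$-spaces. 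Boundedness of $R$ on $L^{p}$ now gives $\|\nabla\mathbf{u}\|_{p,q,T}\leq C\|\Delta\mathbf{v}\|_{p,q,T}\leq C\|h\|_{p,q,T}$, the first asserted estimate. For the second, I would apply the Gagliardo-Nirenberg-Sobolev inequality (Theorem \ref{gaglia}) to $\mathbf{u}(\cdot,t)$ for almost every $t\in(0,T)$, obtaining $\|\mathbf{u}(\cdot,t)\|_{L^{pN/(N-p)}(\mathbb{R}^{N})}\leq C\|\nabla\mathbf{u}(\cdot,t)\|_{L^{p}(\mathbb{R}^{N})}$ when $1<p<N$, and then raise to the $q$-th power and integrate over $(0,T)$ to obtain \eqref{eq12}. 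Uniqueness follows from the uniqueness already contained in Lemma \ref{lema1}: the difference of two solutions of \eqref{eq100} in the stated class solves the homogeneous version of that problem, to which \eqref{eq9} applied with zero source gives $\Delta\mathbf{v}\equiv0$; since $\mathbf{v}(\cdot,t)\in L^{p}(\mathbb{R}^{N})$ is then harmonic for a.e.\ $t$ it vanishes, and hence so does $\mathbf{u}=(-\Delta)^{1/2}\mathbf{v}$.

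I do not expect a serious obstacle here: conceptually the lemma is just ``Lemma \ref{lema1} $+$ boundedness of the Riesz transforms on $L^{p}$ $+$ Sobolev embedding''. The two places where a little care is needed are (i) applying the Fourier-multiplier identities $(-\Delta)^{-1/2}\nabla\cdot=-R$ and $\nabla(-\Delta)^{1/2}=-R\Delta$ only to objects that actually lie in $L^{p}$, which is guaranteed by the conclusions of Lemma \ref{lema1}; and (ii) upgrading Theorem \ref{gaglia} from $C_{0}^{1}(\mathbb{R}^{N})$ to $\mathbf{u}(\cdot,t)$, which is a routine density/approximation argument using $\nabla\mathbf{u}(\cdot,t)\in L^{p}(\mathbb{R}^{N})$. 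The $L^{p}$-boundedness of the Riesz transforms and of the Helmholtz-Leray projection $\mathbb{P}$ for $1<p<\infty$ is used as a black box.
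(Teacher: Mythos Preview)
Your proposal is correct and follows essentially the same route as the paper: reduce to Lemma~\ref{lema1} applied to the source $(-\Delta)^{-1/2}\nabla\cdot h$, use the Calder\'on--Zygmund boundedness of the Riesz transforms to pass from $\Delta\mathbf{v}$ to $\nabla\mathbf{u}$, and finish with the Gagliardo--Nirenberg--Sobolev inequality. If anything, your write-up is more explicit than the paper's about why $\widetilde h\in L^{q}((0,T);L^{p})$, about the multiplier identity $\nabla\mathbf{u}=-R\,\Delta\mathbf{v}$, and about the uniqueness step.
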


\begin{proof}
Applying $\left( -\Delta \right) ^{-1/2}$ in Eq.(\ref{eq91}), we have
\begin{equation}
^{C}\mathbb{D}_{t}^{\alpha }\mathbf{v}-\Delta \mathbf{v}=P\left( -\Delta \right) ^{-1/2}\nabla \cdot h.
\end{equation}

Thus, by the maximum regularity theorem in the fractional sense (Lemma
	\ref{lema1}), we know that there is a unique solution
	$\mathbf{v}\in L^{q}\left(\left( 0,T\right) ;L^{p}\left(
	\mathbb{R}^{N}\right) ^{N}\right) $ of Eq.(\ref{eq100}) for all $T>0$.

Moreover, from the Calderon-Zygmund theorem on singular integrals \cite{pod,pod1} and inequality (\ref{eq91}), we get
\begin{eqnarray}\label{eq118}
\left\Vert \nabla \mathbf{u}\right\Vert _{p,q,T} &=&\left\Vert \nabla \left( -\Delta
\right) ^{1/2}\mathbf{v}\right\Vert _{p,q,T}  \notag \\
&=&\left\Vert \Delta \mathbf{v}\right\Vert _{p,q,T}  \notag \\
&\leq &C\left\Vert h\right\Vert _{p,q,T}
\end{eqnarray}
using inequality (\ref{eq91}).

Using the inequality of Gagliardo-Nirenberg-Sobolev (Theorem \ref{gaglia}) and
	inequality (\ref{eq118}), we have that, for every $t\in \left[ 0,T\right]$, 
\begin{eqnarray}\label{palm}
\int_{0}^{T}\left\Vert \mathbf{u}\left( \tau \right) \right\Vert _{L^{\frac{pN}{N-p}}\left( \mathbb{R}
^{N}\right) }d\tau  &\leq &C\int_{0}^{T}\left\Vert \nabla \mathbf{u}\left( \tau
\right) \right\Vert _{L^{p}\left( \mathbb{R}^{N}\right) }^{q}d\tau \leq   \notag \\
&\leq &\widetilde{C}\int_{0}^{T}\left\Vert g\left( \tau \right) \right\Vert
_{L^{p}\left( \mathbb{R}^{N}\right) }^{q}d\tau .
\end{eqnarray}

Thus, raising both sides of this inequality to $1/q$, 
we conclude that

\begin{equation}
\left\Vert \mathbf{u}\right\Vert _{\frac{pN}{N-p},q,T} =\left(
	\int_{0}^{T}\left\Vert \mathbf{u}\left( \tau \right) \right\Vert
	_{L^{\frac{pN}{N-p}}\left( \mathbb{R}^{N}\right) }d\tau \right)
	^{1/q}\leq C\left( \int_{0}^{T}\left\Vert g\left( \tau \right)
	\right\Vert _{L^{p}\left( \mathbb{R}^{N}\right) }^{q}d\tau \right)
	^{1/q}=C\left\Vert g\right\Vert _{p,q,T}.
\end{equation}
\end{proof}


\section{Uniqueness of mild solution} 
In this section, we demonstrate the main
result of this paper, namely, the uniqueness of mild solution for
time-fractional Navier-Stokes equations Eq.(\ref{eq2}), by means of the estimates
in Lemma \ref{lema1} and Lemma \ref{lema2} and the
Gronwall inequality (Theorem \ref{teo2}).

\begin{theorem} Let $0<T\leq \infty $ and let $\mathbf{u},\mathbf{v}\in C\left(
[0,T);L^{N}\left( \mathbb{R}^{N}\right) ^{N}\right) $ be two solutions of the
time-fractional Navier-Stokes equation on $\left( 0,T\right) \times
\mathbb{R}^{N}$ with the same initial condition $\mathbf{u}_{0}$. Then,
	$\mathbf{u}=\mathbf{v} \in C [0,T)$. 
\end{theorem}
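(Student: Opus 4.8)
The plan is to pass to the difference $\mathbf{w}:=\mathbf{u}-\mathbf{v}$ and to show that $\mathbf{w}\equiv 0$ on $[0,T)$ by feeding the nonlinear term into the maximal regularity estimate of Lemma \ref{lema2} and then closing the resulting integral inequality with the fractional Gronwall inequality of Theorem \ref{teo2}. It suffices to treat $T<\infty$, since uniqueness on $[0,\infty)$ then follows by exhausting the half-line with finite intervals.

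\emph{Step 1: the equation for the difference.} Subtracting the integral representation (\ref{eq7}) written for $\mathbf{u}$ and for $\mathbf{v}$ --- they share the initial datum $\mathbf{u}_{0}$, so the linear term cancels --- and using the algebraic identity
\begin{equation*}
\mathbf{u}\otimes \mathbf{u}-\mathbf{v}\otimes \mathbf{v}=\mathbf{w}\otimes \mathbf{u}+\mathbf{v}\otimes \mathbf{w},
\end{equation*}
one sees that $\mathbf{w}$ is the mild solution of the linear fractional Cauchy problem
\begin{equation*}
{}^{C}\mathbb{D}_{t}^{\alpha }\mathbf{w}-\Delta \mathbf{w}=-\mathbb{P}\nabla \cdot \left( \mathbf{w}\otimes \mathbf{u}+\mathbf{v}\otimes \mathbf{w}\right) ,\qquad \mathbf{w}\left( 0\right) =0 .
\end{equation*}
In particular $\mathbf{w}(0)=0$, so in the Gronwall step the inhomogeneous term will vanish.

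\emph{Step 2: the a priori estimate.} Fix $p=N/2$, so that $1<p<N$ and $\tfrac{pN}{N-p}=N$, and fix any $1<q<\infty$. Put $g:=\mathbf{w}\otimes \mathbf{u}+\mathbf{v}\otimes \mathbf{w}$. By Hölder's inequality in $x$,
\begin{equation*}
\left\Vert g(\tau )\right\Vert _{L^{N/2}}\leq \left( \left\Vert \mathbf{u}(\tau )\right\Vert _{L^{N}}+\left\Vert \mathbf{v}(\tau )\right\Vert _{L^{N}}\right) \left\Vert \mathbf{w}(\tau )\right\Vert _{L^{N}},
\end{equation*}
and since $\mathbf{u},\mathbf{v},\mathbf{w}\in C\big([0,T);L^{N}\big)$ this shows $g\in L^{q}\big((0,t);L^{N/2}(\mathbb{R}^{N})^{N^{2}}\big)$ for every $t<T$. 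Applying Lemma \ref{lema2} with $h=-g$ (so that its solution is exactly our $\mathbf{w}$) gives, with $C=C(p,q,N)$ independent of $t$,
\begin{equation*}
\left\Vert \mathbf{w}\right\Vert _{N,q,t}\leq C\left\Vert g\right\Vert _{N/2,q,t}\leq C\left( \int_{0}^{t}\left( \left\Vert \mathbf{u}(\tau )\right\Vert _{L^{N}}+\left\Vert \mathbf{v}(\tau )\right\Vert _{L^{N}}\right) ^{q}\left\Vert \mathbf{w}(\tau )\right\Vert _{L^{N}}^{q}\,d\tau \right) ^{1/q}.
\end{equation*}
Writing $\Phi (t):=\left\Vert \mathbf{w}\right\Vert _{N,q,t}^{q}=\int_{0}^{t}\left\Vert \mathbf{w}(\tau )\right\Vert _{L^{N}}^{q}\,d\tau$ and $K(\tau ):=\big(\left\Vert \mathbf{u}(\tau )\right\Vert _{L^{N}}+\left\Vert \mathbf{v}(\tau )\right\Vert _{L^{N}}\big)^{q}$, which is continuous hence locally bounded on $[0,T)$, this becomes
\begin{equation*}
\Phi (t)\leq C^{q}\int_{0}^{t}K(\tau )\,\Phi ^{\prime }(\tau )\,d\tau ,\qquad t\in [0,T).
\end{equation*}

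\emph{Step 3: closing the argument, and the obstacle.} Since $\mathbf{w}(0)=0$ the inequality above is homogeneous; combined with the $L^{N}$-continuity of $\mathbf{u},\mathbf{v}$ one first shows $\mathbf{w}\equiv 0$ on a short interval $[0,T_{1}]$, the fractional Gronwall inequality of Theorem \ref{teo2} --- applied with vanishing free term, so that the Mittag-Leffler factor multiplies $0$ --- forcing $\Phi \equiv 0$; one then propagates this to all of $[0,T)$ by a continuation argument, recentering the estimate at the supremum of the times at which $\mathbf{w}$ is already known to vanish and iterating. Once $\Phi \equiv 0$ on $[0,T)$, we get $\mathbf{w}=0$ almost everywhere, and since $\mathbf{w}\in C\big([0,T);L^{N}\big)$ this yields $\mathbf{u}=\mathbf{v}$ on all of $[0,T)$. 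The delicate point is precisely this closure: the product estimate in Step 2 sits at the scaling-critical level $L^{N}$, so the constant multiplying $\Phi$ is not made small just by shrinking the time interval, and to force $\Phi \equiv 0$ one must genuinely use the structure of the problem --- the vanishing of $\mathbf{w}$ at $t=0$ together with the $L^{N}$-continuity of $\mathbf{u}$ and $\mathbf{v}$, and the smoothing encoded in the kernel $(t-\tau )^{\alpha -1}\mathbb{E}_{\alpha ,\alpha }\big((t-\tau )^{\alpha }\Delta \big)$ exploited through Theorem \ref{teo2}. Everything else (the Hölder exponents, the $L^{r}$-boundedness of $\mathbb{P}$ and of the singular integrals, the invocation of Lemma \ref{lema2}) is routine given the material already set up in Section 2.
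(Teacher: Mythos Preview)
Your Step 3 is not a proof; it is precisely where the argument breaks. The inequality you obtain in Step 2,
\[
\Phi(t)=\int_{0}^{t}\Vert \mathbf{w}(\tau)\Vert_{L^{N}}^{q}\,d\tau
\;\le\; C^{q}\int_{0}^{t}K(\tau)\,\Vert \mathbf{w}(\tau)\Vert_{L^{N}}^{q}\,d\tau,
\]
has $K(\tau)=\big(\Vert \mathbf{u}(\tau)\Vert_{L^{N}}+\Vert \mathbf{v}(\tau)\Vert_{L^{N}}\big)^{q}$ on the right, and this quantity is \emph{not} small near $\tau=0$: by continuity it is close to $(2\Vert \mathbf{u}_{0}\Vert_{L^{N}})^{q}$. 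Hence on any short interval the inequality just says $\Phi(t)\le C^{q}\!\sup K\cdot\Phi(t)$, which is vacuous whenever $C^{q}\sup K\ge 1$. Nothing in your list of ingredients --- $\mathbf{w}(0)=0$, $L^{N}$-continuity, Theorem \ref{teo2} --- rescues this: the fractional kernel has already been spent in Lemma \ref{lema2} (your estimate has no $(t-\tau)^{\alpha-1}$ weight left), and Theorem \ref{teo2} does not apply to an inequality of the form $\Phi\le\int K\Phi'$. You have correctly diagnosed that the problem is scaling-critical, but you have not supplied the mechanism that breaks the criticality.

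The paper supplies exactly that mechanism: one splits $\mathbf{u}=\mathbf{u}_{1}+\mathbf{u}_{2}$ and $\mathbf{v}=\mathbf{v}_{1}+\mathbf{v}_{2}$ with $\Vert \mathbf{u}_{1}\Vert_{C([0,T);L^{N})},\Vert \mathbf{v}_{1}\Vert_{C([0,T);L^{N})}\le\varepsilon$ and $\mathbf{u}_{2},\mathbf{v}_{2}\in L^{\infty}_{x,t}$ bounded by $K(\varepsilon)$. The piece of the nonlinearity carrying $\mathbf{u}_{1},\mathbf{v}_{1}$ is handled by Lemma \ref{lema2} exactly as you do, but now the coefficient is $2C\varepsilon$, which is absorbed on the left by choosing $\varepsilon$ small. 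The piece carrying $\mathbf{u}_{2},\mathbf{v}_{2}$ is estimated \emph{directly} from the kernel bound $\Vert \mathbb{E}_{\alpha,\alpha}((t-\tau)^{\alpha}\Delta)\mathbb{P}\nabla\Vert_{L^{1}}=O\big((t-\tau)^{-\alpha/2}\big)$ together with the $L^{\infty}$ bound, producing a genuinely subcritical factor $t^{(2\alpha-1)/4}$; \emph{this} term carries the $(t-\tau)^{\alpha-1}$-type weight needed for Theorem \ref{teo2}, and with zero free term the Gronwall inequality forces $\int\Vert\xi\Vert_{L^{N}}^{4}=0$. The decomposition is the missing idea in your attempt.
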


\begin{proof} For $\mathbf{u},\mathbf{v}\in C\left(
	[0,T);L^{N}\left(\mathbb{R}^{N}\right) ^{N}\right) $ and an $\varepsilon
	>0$, there are two decomposition 
	$\mathbf{u}=\mathbf{u}_{1}+\mathbf{u}_{2}$ and
	$\mathbf{v}=\mathbf{v}_{1}+\mathbf{v}_{2}$ such that, for every $T>0$,

\begin{equation}\label{eq13}
\left\Vert \mathbf{u}_{1}\right\Vert _{C\left( [0,T);L^{N}\left( \mathbb{R}^{N}\right) ^{N}\right) }\leq \varepsilon \text{ \ ; }\underset{\left(x,t\right) \in \mathbb{R}^{N}\times \left( 0,T\right) }{\sup }\left\vert \mathbf{u}_{2}\left( x,t\right)\right\vert <K\left( \varepsilon \right) 
\end{equation}
and
\begin{equation}\label{eq14}
\left\Vert \mathbf{v}_{1}\right\Vert _{C\left( [0,T);L^{N}\left( \mathbb{R}^{N}\right) ^{N}\right) }\leq \varepsilon \text{ \ ; }\underset{\left(x,t\right) \in \mathbb{R}^{N}\times \left( 0,T\right) }{\sup }\left\vert \mathbf{v}_{2}\left( x,t\right)\right\vert <K\left( \varepsilon \right).
\end{equation}

We can consider
\begin{equation*}
\mathbf{u}_{2}\left( x,t\right) =\left\{ 
\begin{array}{ccc}
\mathbf{u}\left( x,t\right) , & \text{for} & \left\vert \mathbf{u}\left( x,t\right)
\right\vert <K \\ 
0, & \text{for} & \left\vert \mathbf{u}\left( x,t\right) \right\vert \geq K%
\end{array}%
\right.
\end{equation*}
and
\begin{equation*}
\mathbf{v}_{2}\left( x,t\right) =\left\{ 
\begin{array}{ccc}
\mathbf{v}\left( x,t\right) , & \text{for} & \left\vert \mathbf{v}\left( x,t\right)
\right\vert <K \\ 
0, & \text{for} & \left\vert \mathbf{v}\left( x,t\right) \right\vert \geq K%
\end{array}%
\right. 
\end{equation*}
for a large enough $K$.

Now, assume that $\mathbf{u}$ and $\mathbf{v}$ are solutions in
$C\left([0,T);L^{N}\left( \mathbb{R}^{N}\right) ^{N}\right) $ with the
same initial conditions, for instance $\mathbf{u}\left( 0\right)
=\mathbf{v}\left( 0\right) =\mu$. Then, the difference $\xi =
\mathbf{u}-\mathbf{v}$ is a solution of the integral equation
\begin{equation*}
\xi\left( t\right) =-\int_{0}^{T}\left( t-\tau \right) ^{\alpha -1}\mathbb{E}_{\alpha ,\alpha }\left( \left( t-\tau \right) ^{\alpha }\Delta \right) \mathbb{P}\nabla \cdot \left( \xi\otimes \mathbf{u}+\mathbf{v}\otimes \xi\right) \left( \tau \right) d\tau .
\end{equation*}

Now, consider the functions
\begin{equation*}\label{eq16}
\xi_{1}\left( t\right) =-\int_{0}^{T}\left( t-\tau \right) ^{\alpha -1}\mathbb{E}_{\alpha ,\alpha }\left( \left( t-\tau \right) ^{\alpha }\Delta \right) \mathbb{P}\nabla \cdot\left( \xi\otimes \mathbf{u}_{1}+\mathbf{v}_{1}\otimes \xi\right) \left( \tau \right) ds 
\end{equation*}
and
\begin{equation*}\label{eq17}
\xi_{2}\left( t\right) =-\int_{0}^{T}\left( t-\tau \right) ^{\alpha -1}\mathbb{E}_{\alpha ,\alpha }\left( \left( t-\tau \right) ^{\alpha }\Delta \right) \mathbb{P}\nabla \cdot\left( \xi\otimes \mathbf{u}_{2}+\mathbf{v}_{2}\otimes \xi\right) \left( \tau \right) ds.
\end{equation*}

The convolution operator $\mathbb{E}_{\alpha ,\alpha
}\left( \left( t-\tau \right) ^{\alpha }\Delta \right)
\mathbb{P}\nabla$ has an integrable core whose standard is $O\left(
\left( t-\tau \right) ^{-\alpha /2}\right) $ in $L_{1}$. From this property and using the
estimates Eq.(\ref{eq13}) and Eq.(\ref{eq14}) and H\"older's inequality
repeatedly in time, we have that 

\begin{eqnarray}\label{eq18}
\left\Vert \xi_{2}\left( t\right) \right\Vert _{L^{N}} &\leq &C\int_{0}^{T}\left( t-\tau \right) ^{\frac{\alpha }{2}-1}\left\Vert
\xi\left( \tau \right) \right\Vert _{L^{N}}\left( \left\Vert \mathbf{u}_{2}\left( \tau
\right) \right\Vert _{L^{\infty }}+\left\Vert \mathbf{v}_{2}\left( \tau \right)
\right\Vert _{L^{\infty }}\right) d\tau   \notag \\
&\leq &2CK\left( \varepsilon \right) \left( \int_{0}^{T}\left( t-\tau
\right) ^{\frac{2}{3}\left( \alpha -2\right) }d\tau \right) ^{3/4}\left(
\int_{0}^{T}\left\Vert \xi\left( \tau \right) \right\Vert _{L^{N}}^{4}d\tau
\right) ^{1/4}  \notag \\
&\leq &2CK\left( \varepsilon \right) t^{\frac{2\alpha -1}{4}}\left(
\int_{0}^{T}\left\Vert \xi\left( \tau \right) \right\Vert _{L^{N}}^{4}d\tau
\right) ^{1/4} , 
\end{eqnarray}
where $C$ denotes a constant independent of $\xi,t$.

Now, raising both sides of inequality (\ref{eq18}) to the fourth power and taking the integral with respect to $\tau \in \left( 0,T\right)$, we have
\begin{equation}\label{eq19}
\int_{0}^{T}\left\Vert \xi_{2}\left( \tau \right) \right\Vert _{L^{N}}^{4}d\tau \leq 2C^{4}\left( K\left( \varepsilon \right) \right) ^{4}T^{2\alpha -1}\int_{0}^{T}\left( \int_{0}^{\tau}\left\Vert \xi\left( s
\right) \right\Vert _{L^{N}}^{4}ds \right) d\tau .
\end{equation}

On the other hand, by estimate Eq.(\ref{eq12}) of Lemma \ref{lema2}, estimates
Eq.(\ref{eq13}) and Eq.(\ref{eq14}), H\"older's inequality and inequality
Eq.(\ref{va}), we obtain

\begin{eqnarray}\label{eq20}
\int_{0}^{T}\left\Vert \xi_{1}\left( \tau \right) \right\Vert _{L^{N}}^{4}d\tau  &\leq &C\int_{0}^{T}\left\Vert \left( \xi\otimes \left( \mathbf{u}_{1}+\mathbf{v}_{1}\right) \right) \left( \tau \right) \right\Vert _{L^{\frac{N}{2}}}^{4}d\tau   \notag \\
&\leq &C\left( \left\Vert \mathbf{u}_{1}\left( \tau \right) \right\Vert _{C\left(
[0,T);L^{N}\left( \mathbb{R}^{N}\right) ^{N}\right) }+\left\Vert \mathbf{v}_{1}\left( \tau \right) \right\Vert
_{C\left( [0,T);L^{N}\left( \mathbb{R}^{N}\right) ^{N}\right) }\right) \int_{0}^{T}\left\Vert \xi\left( \tau \right) \right\Vert _{L^{N}}^{4}d\tau   \notag \\
&\leq &2\varepsilon C\int_{0}^{T}\left\Vert \xi\left( \tau \right) \right\Vert
_{L^{N}}^{4}d\tau.
\end{eqnarray}

Taking $\varepsilon $ small, we have from inequalities Eq.(\ref{eq19}) and
Eq.(\ref{eq20}) that 

\begin{equation*}
\int_{0}^{T}\left\Vert \xi\left( \tau \right) \right\Vert _{L^{N}}^{4}d\tau \leq \widetilde{C_{\alpha}}\left( K\left( \varepsilon \right) \right) ^{4}  T^{2\alpha-1} \int_{0}^{T}(t-\tau)^{\alpha-1}\left( \int_{0}^{\tau }\left\Vert \xi\left( s\right) \right\Vert _{L^{N}}^{4}ds\right) d\tau 
\end{equation*}
$0\leq \tau \leq T.$

Using the Gronwall inequality (Theorem \ref{teo2}), we finally have
\begin{equation*}
\int_{0}^{T}\left\Vert \xi\left( \tau \right) \right\Vert _{L^{N}}^{4}d\tau \leq 0.\mathbb{E}_{\alpha }\left( \widetilde{C_{\alpha}}\left( K\left( \varepsilon \right) \right) ^{4}T^{3\alpha -1}\Gamma(\alpha)\right) =0,
\end{equation*}
which implies that $\displaystyle\int_{0}^{T}\left\Vert \xi\left( \tau \right) \right\Vert _{L^{N}}^{4}d\tau =0\Longleftrightarrow \xi=0$. Therefore, $\mathbf{u}=\mathbf{v}$.
\end{proof}


\section{Concluding remarks}

We investigated the uniqueness of mild solution for time-fractional
Navier-Stokes equations in $L^{N} \left( \mathbb{R} ^{N}\right) ^{N} $ by means
of 
estimates (Lemma \ref{lema1} and Lemma \ref{lema2}) and the Gronwall inequality.
A direct consequence of the results obtained here is that when $\alpha = 1$, we
recover the result valid for the classical Navier-Stokes equation. It is worth
mentioning that it remains an open problem 
the investigation of the existence, uniqueness and regularity
of mild solutions for time-fractional Navier-Stokes equations introduced by
$\psi$-Caputo fractional derivative \cite{Jo2}. 
It seems that a possible way to approach this open problem would be to introduce a
new Laplace transform involving the derivative of a function taken in relation to
another function.

\section*{Acknowledgment}

JVCS acknowledges the financial support of a PNPD-CAPES (process number
88882.305834/2018-01) scholarship of the Postgraduate Program in Applied
Mathematics of IMECC-Unicamp. We are grateful to Dr. J. Em\'{\i}lio Maiorino 
for many and usefull discussions.


\end{document}